\documentclass[11pt]{amsart}  
 \usepackage[dvips]{epsfig}
\usepackage{graphicx}
\usepackage{latexsym}
\usepackage{amsfonts,amsmath,amssymb}
\newtheorem{theorem}{Theorem}[section]

\newtheorem{lemma}[theorem]{Lemma}

\newtheorem{corollary}[theorem]{Corollary}
\newtheorem{example}[theorem]{Example}




\author[M. B\'ona]{Mikl\'os  B\'ona}
\title[A new upper bound for 1324-avoiding permutations]{A new upper bound for
1324-avoiding permutations}
\address{\rm M. B\'ona, Department of Mathematics, 
University of Florida,
358 Little Hall, 
PO Box 118105, 
Gainesville, FL 32611--8105 (USA)
}

\date{}

\begin{document}

\begin{abstract} We prove that the number of 1324-avoiding permutations
of length $n$ is less than $(7+4\sqrt{3})^n$.
\end{abstract}

\maketitle

\section{Introduction}
\subsection{Definitions and Open Questions}
 The theory of pattern avoiding permutations has
seen tremendous progress during the last two decades. The key definition is
the following. Let $k\leq n$, let $p=p_1p_2\cdots p_n$ be a permutation of
 length $n$, and
let $q=q_1q_2\cdots q_k$ be a permutation of length $k$. We say that $p$
 avoids $q$ if there 
are no $k$ indices $i_1<i_2<\cdots <i_k$ so that for all $a$ and $b$, 
the inequality $p_{i_a}<p_{i_b}$ holds if and only if the inequality $q_a<q_b$
holds. For instance, $p=2537164$ avoids $q=1234$ because $p$ does not contain
an increasing subsequence of length four. See \cite{combperm} for an overview
of the main results on pattern avoiding permutations.

The shortest pattern for which even some of the most basic questions are open
is $q=1324$, a pattern that has been studied for at least 17 years.
 For instance,
 there is no known exact formula for the number 
$S_n(1324)$ of permutations of length $n$ (or, in what follows,
 $n$-permutations)
 avoiding 1324. Even the 
value of $L(1324)=\lim_{n\rightarrow \infty} \sqrt[n]{S_n(1324)}$ is unknown, though
the limit is known to exist \cite{arratia}.

The best known upper bound for the numbers $S_n(1324)$ was given in 2011
 by Claesson,
Jelinek and Steingr\'{\i}msson \cite{claesson} who proved that for all positive 
integers $n$, the inequality $S_n(1324)<16^n$ holds. The best known lower
bound, $S_n(1324)\geq 9.42^n$, was given by five authors in \cite{albert} in
2005. 

In this paper, we prove the inequality $S_n(1324)<(7+4\sqrt{3})^n$. The proof
introduces a refined version  of a decomposition of 1324-avoiding permutations
given in \cite{claesson}, encodes such permutations by two words over a
4-element alphabet, and then enumerates those words. 

\subsection{Preliminaries} \label{prelim}
In this section, we present a few simple facts that are well-known among
 researchers
working in the area that will be necessary in order to understand some of our
proofs in the subsequent sections. Readers familiar with the area may skip this
section. Proofs that are not given here can be found in \cite{combperm}.

\begin{theorem} Let $q$ be any pattern of length three. Then $S_n(q)=C_n=
{2n\choose n}/(n+1)$, the $n$th Catalan number. In particular, 
$S_n(q)<4^n$. 
\end{theorem}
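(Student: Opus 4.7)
The plan is to first use standard symmetries of the symmetric group to collapse the six length-three patterns into at most two equivalence classes, then handle the remaining case(s) by a direct first-position decomposition, and finally derive the $4^n$ bound as a one-line consequence of the Catalan formula.

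For the symmetry reduction, I would invoke the three operations of reversal $p_1\cdots p_n \mapsto p_n\cdots p_1$, complementation $p_i \mapsto n+1-p_i$, and inversion $p \mapsto p^{-1}$; each preserves the number of permutations avoiding a given pattern, after the obvious corresponding operation on the pattern. Reversal matches $\{123,321\}$ and $\{132,231\}$ and $\{213,312\}$; complementation then matches $\{132,312\}$ and $\{213,231\}$. Combining these identifications shows that $S_n(q)$ takes the same value throughout the class $\{132,213,231,312\}$, and separately throughout $\{123,321\}$. So it suffices to enumerate two representatives, say $q=132$ and $q=123$.

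For $q=132$, I would argue by decomposition on the position $k$ of the maximum entry $n$. If any entry to the left of $n$ were less than any entry to its right, the triple would form a $132$ pattern; hence the left block consists of the values $\{n-k+1,\ldots,n-1\}$ and the right block of the values $\{1,\ldots,n-k\}$, each being independently a $132$-avoiding permutation after order-preserving relabeling. Writing $s_n = S_n(132)$ with $s_0 = 1$, this yields the Catalan recursion
\[ s_n \;=\; \sum_{k=1}^{n} s_{k-1}\,s_{n-k}, \]
forcing $s_n = C_n$. For $q=123$, one can repeat a similar decomposition (e.g.\ split on the position of the leftmost left-to-right minimum, or invoke the Simion--Schmidt bijection that converts $123$-avoiders into $132$-avoiders). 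The final estimate is then immediate:
\[ C_n \;=\; \frac{1}{n+1}\binom{2n}{n} \;\leq\; \binom{2n}{n} \;<\; 2^{2n} \;=\; 4^n, \]
with strict inequality for $n\geq 1$ because $\binom{2n}{n}$ is only one term of the full binomial expansion of $(1+1)^{2n}$.

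The only genuinely non-cosmetic step is the identification $S_n(123)=S_n(132)$, since no composition of reversal, complementation, and inversion maps $123$ to $132$; that class equality requires a bijective argument or a separate derivation of the same recursion. Everything else is either a trivial symmetry or a short counting manipulation, so I would spend the bulk of the writing on the decomposition and recursion in the $132$-avoiding case.
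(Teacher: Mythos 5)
The paper does not actually prove this theorem: it appears in the Preliminaries as a known fact, with the proof explicitly deferred to \cite{combperm}, so there is no in-paper argument to compare yours against. Your proposal is the standard textbook proof and is essentially correct. The symmetry reduction via reversal and complementation correctly collapses the six patterns into the classes $\{132,213,231,312\}$ and $\{123,321\}$; the decomposition of a $132$-avoider at the position of the entry $n$ is sound and yields the Catalan recursion $s_n=\sum_{k=1}^n s_{k-1}s_{n-k}$; and $C_n\leq\binom{2n}{n}<4^n$ for $n\geq 1$ is immediate. You are also right to single out $S_n(123)=S_n(132)$ as the one step requiring genuine work, and the Simion--Schmidt bijection you invoke (which fixes the left-to-right minima and rewrites the remaining entries canonically --- exactly the mechanism the paper itself recalls in its Preliminaries for reconstructing $132$-avoiders) is the standard way to close it. One small caution: your alternative suggestion of splitting a $123$-avoider ``on the position of the leftmost left-to-right minimum'' is malformed, since the leftmost left-to-right minimum of any permutation is its first entry; if you want a direct recursion for $123$-avoiders you should decompose differently (for instance, using the fact that such a permutation is a shuffle of two decreasing sequences, or conditioning on the position and value of $n$, which requires a slightly more careful case analysis than in the $132$ case). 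Relying on Simion--Schmidt avoids this issue entirely, so the proof as you would actually write it is complete.
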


An entry of a permutation is called a {\em left-to-right} minimum if it is 
smaller than all entries on its left. {\em Right-to-left} maxima are defined
analogously. For instance, in $p=351624$, the left-to-right minima are 3 and 1,
while the right-to-left maxima are 6 and 4. A 132-avoiding permutation is
 completely determined by the set of
its left-to-right minima, and the set of indices that belong to entries that
are left-to-right minima. Indeed, left-to-right minima must always be in
decreasing order. Furthermore, once the set and position of the  left-to-right
 minima 
are given,  the order of elements that are not left-to-right
minima is uniquely determined. To see this, fill the positions that belong to
entries that are not left-to-right minima one by one, going left to right.
In each step, the smallest remaining entry that is larger than the closest
left-to-right minimum $m$ on the right of the position at hand must be placed.
If we do not follow this procedure and place the entry $y$ instead of the
 smaller entry $x$, then the 132-pattern $myx$ is formed. For example,
to find the unique permutation of length 6 whose left-to-right minima are
the entries 1, 3, and 4, and that has left-to-right minimia in the first, second
and fifth position, write the left-to-right minima in the specified positions
in decreasing order, to get $43**1*$, where the $*$ denote positions that are
still empty. Then fill the empty slots with the remaining entries, always
 placing 
the smallest entry that is larger than the closest left-to-right minimum on the
left. In this case, that means first placing 5, then 6, then 2, to get
$435612$. 

In an analogous way, each 213-avoiding permutation is determined by the 
set of its right-to-left maxima, and the set of indices that belong to 
right-to-left maxima.

In preparation to our main results, we  reformulate the facts discussed
in the last 
 paragraphs. Permutations $p=p_1p_2\cdots p_n$ of length $n$ that avoid 132
can be {\em injectively} encoded by ordered pairs of words $(u(p), v(p))$
of length $n$ defined as follows. The $i$th letter of $u(p)$ is 0 if $p_i$ is 
a left-to-right minimum in $p$, and 1 otherwise. The $i$th letter of $v(p)$ is
0 if the entry 
$i$ is a left-to-right minimum in $p$, and 1 otherwise.  The encoding
of 213-avoiding permutations is analogous. 

\section{Coloring entries} \label{coloring}

The starting point of our proof is the following decomposition of 1324-avoiding
permutations, given in \cite{claesson}. 

Let $p=p_1p_2\cdots p_n$ be a 1324-avoiding permutation, and let us color each 
entry of $p$ red or blue as we move from left to right, 
according the following rules. 
\begin{enumerate}
\item If coloring $p_i$ red would create a 132-pattern with all red entries,
then color $p_i$ blue, and
\item if there already is a blue entry smaller than $p_i$, then color 
$p_i$ blue;
\item otherwise color $p_i$ red.  
\end{enumerate}

It is then proved in \cite{claesson} that the red entries form a 132-avoiding
permutation and
the blue entries form a 213-avoiding permutation. From this, it is not difficult
to prove that the number of 1324-avoiding $n$-permutations is less than $16^n$. 
Indeed, there are at most $2^n$ possibilities for the set of the red entries
(the 
blue entries being the remaining entries), and there are at most $2^n$
 possibilities for the positions in
 which
red entries are placed (the blue entries then must be placed in the remaining
positions). Once the set and positions of the $k$ red entries are known,
 there are $C_k<4^k$ possibilities for their permutation, just as there are
 $C_{n-k}<
4^{n-k}$ possibilities for the permutation of the blue entries, completing the
proof of the inequality $S_n(1324)<16^n$.

\section{Refining the coloring} \label{refined}
In this section, we improve the upper bound on $S_n(1324)$ by using 
a more refined
decomposition of 1324-avoiding permutations, which enables
 us to carry out a more careful counting argument. 
Let us color each entry of the 1324-avoiding permutation 
$p=p_1p_2\cdots p_n$ red or blue as in
Section \ref{coloring}. 
Furthermore, let us mark each entry of $p$ with one of the letters $A$,
 $B$, $C$, or

$D$ as follows.

\begin{enumerate}
\item Mark each red entry that is a left-to-right minium in the partial 
permutation of red entries by $A$, 
\item mark each red entry that is not a left-to-right minimum in the partial
permutation of red entries by $B$, 
\item mark each blue entry that is not a right-to-left maximum in the partial
 permutation
of blue entries by $C$, and
\item mark each blue entry that is a right-to-left maximum in the partial
 permutation of
blue entries by $D$.
\end{enumerate}

Call entries marked by the letter $X$ entries of {\em type} $X$.
Let $w(p)$ be the $n$-letter word over the alphabet $\{A,B,C,D\}$ defined above.
In other words, the $i$th letter of $w(p)$ is the type of $p_i$ in $p$. 
Let $z(p)$ be the $n$-letter word over the alphabet $\{A,B,C,D\}$ whose $i$th
letter is the type of the entry $i$ in $p$. 

\begin{example} Let $p=3612745$. Then the subsequence of red entries of $p$
is $36127$, the subsequence of blue entries of $p$ is $45$, so 
$w(p)=ABABBCD$, while $z(p)=ABACDBB$. 
\end{example}

The following lemma shows a property of $w(p)$ that will enable us to
improve the upper bound on $S_n(1324)$. Let us say that a word $w$ has
a  {\em $CB$-factor} if somewhere in $w$, a letter $C$ is immediately followed
by a letter $B$. 

\begin{lemma} \label{noCB} If $p$ is 1324-avoiding, then $w(p)$ has no 
$CB$-factor.
\end{lemma}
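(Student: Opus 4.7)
The plan is to argue by contradiction: assume there exist consecutive positions $i, i+1$ with $p_i$ of type $C$ and $p_{i+1}$ of type $B$, and show that this forces a $1324$-pattern. Write $c=p_i$ and $b=p_{i+1}$, so $c$ is blue and not a right-to-left maximum among blue entries, while $b$ is red and not a left-to-right minimum among red entries. I will produce four entries at positions before $i$, at $i$, at $i+1$, and after $i+1$, whose values stand in the order $1,3,2,4$.

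First I would extract the two ``witness'' entries. Because $b$ is of type $B$, there is a red entry $a$ with position less than $i+1$ and $a<b$; since the entry at position $i$ is $c$, which is blue, necessarily $a$ lies at a position strictly less than $i$. Dually, because $c$ is of type $C$, there is a blue entry $d$ with position greater than $i$ and $d>c$; since the entry at position $i+1$ is $b$, which is red, $d$ lies at a position strictly greater than $i+1$. So the positions of $a,c,b,d$ are already in the desired left-to-right order.

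The key step is to prove $c>b$, which is the only place where the coloring rules really enter. When the algorithm processes position $i+1$ and assigns $b$ the color red, rule (2) must fail for $b$: there is no blue entry smaller than $b$ among $p_1,\dots,p_i$. But $c=p_i$ is already blue at that moment, so we cannot have $c<b$; since $b\neq c$, this gives $c>b$. Combining with the inequalities $a<b$ and $c<d$ already in hand yields the value chain $a<b<c<d$.

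Finally, I would read off the $1324$-pattern: in position order the quadruple is $a,c,b,d$, whose values are the $1$st, $3$rd, $2$nd, $4$th smallest, respectively, contradicting that $p$ is $1324$-avoiding. The only subtle point is step three, i.e.\ using the coloring rule to force $c>b$; the rest is essentially bookkeeping from the definitions of types $B$ and $C$ together with the fact that positions $i$ and $i+1$ are occupied by entries of opposite colors.
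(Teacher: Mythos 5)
Your proof is correct and follows essentially the same route as the paper: extract a smaller red entry $a$ to the left of $b$ (from type $B$), a larger blue entry $d$ to the right of $c$ (from type $C$), deduce $c>b$ from coloring rule (2), and assemble the $1324$-pattern $a\,c\,b\,d$. The only difference is that you spell out slightly more carefully why $a$ and $d$ lie strictly outside positions $i$ and $i+1$, which the paper leaves implicit.
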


\begin{proof}
Let us assume that $C_1$ is the $i$th letter of $w(p)$, and $B_1$ is
 the $(i+1)$st letter of $w(p)$. 
That means that $p_i>p_{i+1}$, otherwise the fact that $p_i$ is blue would
 force $p_{i+1}$ to be blue.
Furthermore, since $p_i$ is not a right-to-left maximum, there is an entry
 $d$ on the right of $p_i$
(and on the right of  $p_{i+1}$) so that $p_i<d$. Similarly, since $p_{i+1}$
 is not
 a left-to-right minimum, 
there is an entry $a$ on its left so that $a<p_{i+1}$. However, then
 $ap_{i}p_{i+1}d$ is a 1324-pattern, which is a contradiction.
\end{proof}

\begin{lemma}
If $p$ is 1324-avoiding, then there is no entry $i$ in $p$ so that $i$
 is of type  $C$
and $i+1$ is of type $B$. 
\end{lemma}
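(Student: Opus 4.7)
The plan is to mirror the strategy of Lemma \ref{noCB}, but now using the value constraints instead of the positional ones, and exploiting the coloring rules directly. Suppose for contradiction that the entry with value $i$ is of type $C$ and the entry with value $i+1$ is of type $B$; let them sit at positions $\gamma$ and $\beta$, respectively.

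First, I would unpack what types $B$ and $C$ force. Since $i+1$ is of type $B$, there is a red entry $r$ occurring before position $\beta$ with $r < i+1$; because $i$ is blue, we must have $r \neq i$, hence $r < i$. Since $i$ is of type $C$, there is a blue entry $b$ occurring after position $\gamma$ with $b > i$; because $i+1$ is red, $b \neq i+1$, hence $b > i+1$. So we have a red $r < i$ to the left of $i+1$, and a blue $b > i+1$ to the right of $i$.

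The natural case split is whether $i+1$ sits to the left or to the right of $i$. In the first case $\beta < \gamma$, so in position order the four entries $r, i+1, i, b$ occupy positions $\mathrm{pos}(r) < \beta < \gamma < \mathrm{pos}(b)$, and their value ranks are $1, 3, 2, 4$: this is a 1324-pattern, contradicting the hypothesis on $p$. In the second case $\gamma < \beta$, I would appeal directly to the coloring rule: when the algorithm processes position $\beta$ to color the entry with value $i+1$, the entry at position $\gamma < \beta$ has already been colored blue and has value $i < i+1$, so rule (2) forces $p_\beta$ to be colored blue; this contradicts $i+1$ being of type $B$.

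I do not foresee a serious obstacle here; the argument is essentially a careful unpacking of the coloring rules combined with the case distinction on the relative positions of the entries $i$ and $i+1$. The only item that needs mild care is verifying that the four entries $r, i, i+1, b$ produced in the first case are genuinely distinct and have the claimed value ranking, which follows immediately from $r < i < i+1 < b$.
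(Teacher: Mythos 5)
Your proof is correct and follows essentially the same route as the paper: the paper first rules out the configuration with $i$ to the left of $i+1$ (your Case 2, via coloring rule (2)) and then exhibits the 1324-pattern $a\,(i+1)\,i\,d$ in the remaining configuration, which is exactly your pattern $r\,(i+1)\,i\,b$. Your explicit verification that $r<i$ and $b>i+1$ (using the color mismatch to rule out $r=i$ and $b=i+1$) is a detail the paper leaves implicit, but the argument is the same.
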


\begin{proof} Analogous to the proof of lemma \ref{noCB}. If such a pair
existed, $i$ would have to be on the right of $i+1$, since $i$ is blue
and $i+1$ is red. As $i$ is not a right-to-left maximum, there would be
a larger entry $d$ on its right. As $i+1$ is not a left-to-right minimum, 
there would be a smaller entry $a$ on its left. However, then $a(i+1)id$
would be a 1324-pattern.
\end{proof}

\begin{lemma} Let $h_n$ be the number of words of length $n$ that consist
of letters $A$, $B$, $C$ and $D$ that have no $CB$-factors. 
Then we have 
\[H(x)=\sum_{n\geq 0}h_nx^n = \frac{1}{1-4x+x^2}.
\] This implies
\begin{equation}
\label{exact} h_n=\frac{3+2\sqrt{3}}{6} \cdot \left(2+\sqrt{3}\right)^n
+\frac{3-2\sqrt{3}}{6} \cdot \left(2-\sqrt{3}\right)^n .\end{equation}
\end{lemma}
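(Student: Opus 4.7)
The plan is to set up a linear recurrence for $h_n$ by classifying valid words by their last letter, translate that recurrence into the claimed rational form for $H(x)$, and then apply partial fractions (equivalently, the characteristic-equation method) to derive the explicit formula.

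First I would write $h_n = a_n + b_n + c_n + d_n$, where $a_n, b_n, c_n, d_n$ denote the number of valid words of length $n$ ending in $A$, $B$, $C$, $D$ respectively. Appending any of $A$, $C$, or $D$ to a valid word of length $n-1$ produces a valid word of length $n$, since none of these creates a $CB$-factor at the end; hence $a_n = c_n = d_n = h_{n-1}$. Appending $B$ is forbidden precisely when the word of length $n-1$ ends in $C$, so $b_n = h_{n-1} - c_{n-1}$. Using the identity $c_{n-1} = h_{n-2}$ from the first of these relations, the total becomes
\[h_n = 4 h_{n-1} - h_{n-2}, \qquad n \geq 2,\]
with initial conditions $h_0 = 1$ and $h_1 = 4$. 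As a sanity check, this gives $h_2 = 15$, matching $4^2 = 16$ total words minus the unique forbidden length-two word $CB$.

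Multiplying the recurrence by $x^n$ and summing over $n \geq 2$ converts it to $H(x)(1 - 4x + x^2) = 1$, which is the stated generating function identity. For the closed form (\ref{exact}), I would factor $1 - 4x + x^2 = (1 - (2+\sqrt{3})x)(1 - (2-\sqrt{3})x)$ and write $h_n = \alpha(2+\sqrt{3})^n + \beta(2-\sqrt{3})^n$; the $2 \times 2$ linear system coming from $h_0 = 1$ and $h_1 = 4$ then gives $\alpha = (3+2\sqrt{3})/6$ and $\beta = (3-2\sqrt{3})/6$.

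There is no substantive obstacle here---the whole argument is a standard constant-coefficient recurrence calculation. The only mild care is in the combinatorial step: being explicit that the letters $A$, $C$, $D$ may always be appended while $B$ is blocked exactly by a trailing $C$, so that the four-variable system for $(a_n, b_n, c_n, d_n)$ collapses to the clean two-term recurrence above.
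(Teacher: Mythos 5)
Your proposal is correct and follows essentially the same route as the paper: both derive the recurrence $h_n = 4h_{n-1} - h_{n-2}$ by observing that all four letters can be appended to a valid word of length $n-1$ except in the $h_{n-2}$ cases producing a trailing $CB$, then pass to the generating function and apply partial fractions. Your last-letter bookkeeping $(a_n,b_n,c_n,d_n)$ is just a slightly more explicit packaging of the paper's subtraction argument, and all the computations check out.
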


\begin{proof}
We claim that if $n\geq 2$, then $h_n=4h_{n-1}-h_{n-2}$. Indeed, take any
of the $h_{n-1}$ words of length $n-1$ that have the given property. 
Affix any of the four letters of the alphabet to the end of each such word.
The result is a word counted by $h_n$, except in the $h_{n-2}$ cases in which
the last two letters are $C$ and $B$, in that order.

Together with the initial conditions $h_0=1$ and $h_1=4$, this leads to the
functional equation
\[H(x)-4x-1= 4x(H(x)-1)-x^2H(x).\]
Expressing $H(x)$, we obtain
\[H(x)=\frac{1}{1-4x+x^2}\] as claimed.
 It is now routine to find the exact formula
for $h_n$ using partial fractions.
\end{proof}

The following, simple but crucial lemma tells us that the ordered
pair $(w(p),z(p))$ completely determines the 1324-avoiding 
permutation $p$.
 Readers who prefer
may consult Section \ref{prelim} first for some background on this argument.
 
\begin{lemma} \label{determines}
Let $Av_n(1324)$ be the set of all 1324-avoiding $n$-permutations. Then
the map $f:Av_n(1324)\rightarrow H_n\times H_n$, given by $f(p)=(w(p),z(p))$
 is injective.
\end{lemma}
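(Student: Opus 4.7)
The plan is to show that the pair $(w(p),z(p))$ carries exactly enough information to reconstruct the red and blue subpermutations separately, and then to interleave them. The first step is to read off a four-fold partition of $\{1,2,\dots,n\}$: positions marked $A$ or $B$ in $w(p)$ are the red positions, positions marked $C$ or $D$ are the blue positions, and analogously $z(p)$ partitions the set of values $\{1,\dots,n\}$ into red values (those $i$ with $z(p)_i\in\{A,B\}$) and blue values (those $i$ with $z(p)_i\in\{C,D\}$). I would remark that these two partitions are automatically consistent because they come from the same underlying coloring of entries of $p$.

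Next I would isolate the red subpermutation $\rho$ of $p$. By the result quoted from \cite{claesson}, $\rho$ is $132$-avoiding. I already know its set of positions and its set of values. Moreover, the definition of the marks $A$ and $B$ says exactly that the entries of $\rho$ marked $A$ are precisely the left-to-right minima of $\rho$; so $w(p)$ identifies the positions of these left-to-right minima, and $z(p)$ identifies their values. By the fact recorded in Section~\ref{prelim}, a $132$-avoiding permutation is determined by the set and the positions of its left-to-right minima, so $\rho$ is recovered from $(w(p),z(p))$.

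The dual step recovers the blue subpermutation $\beta$, which is $213$-avoiding. The entries of $\beta$ marked $D$ are by definition its right-to-left maxima, and both the positions and the values of these maxima are read off from $(w(p),z(p))$. Applying the analogous preliminary fact for $213$-avoiders then determines $\beta$ completely. Finally, placing the entries of $\rho$ and $\beta$ into the red and blue positions, respectively, reconstructs $p$ uniquely, which proves injectivity of $f$.

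The only real obstacle is bookkeeping: one has to be careful to distinguish positions of $p$ from positions of the subpermutations $\rho$ and $\beta$, and similarly for values. However, since the bijection between these index sets is already specified by $w(p)$ and $z(p)$, translating the preliminary statements about $132$- and $213$-avoiders into statements about $\rho$ and $\beta$ inside $p$ is straightforward. No new combinatorial argument is needed beyond the two preliminaries and the definition of the marks.
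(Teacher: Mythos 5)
Your proposal is correct and follows essentially the same route as the paper: the paper's proof also recovers the red (132-avoiding) and blue (213-avoiding) subpermutations from the position/value data in $w(p)$ and $z(p)$, using exactly the reconstruction of Section~\ref{prelim} (type-$A$ entries decreasing, type-$B$ entries filled greedily, and dually for the blue entries). You simply cite the preliminary encoding facts where the paper re-executes them, which is a presentational difference only.
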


\begin{proof} Let $H_n$ be the set of all words of length $n$ over the
alphabet $\{A,B,C,D\}$ in which a letter $C$ is never immediately followed
by a letter $B$. Then $|H_n|=h_n$. Let $(w,z)\in H_n$, and let us assume
that $f(p)=(w,z)$, that is, that $w(p)=w$, and $z(p)=z$ for some 
$p\in Av_n(1324)$. 

Then $w$ tells us for which indices $i$ the entry $p_i$ will be of type $A$, 
namely for the indices $i$ for which the $i$th letter of $w$ is $A$.
Similary, $w$ tells us the indices $j$ for which the entry $p_i$ is of type
$B$, type $C$, or type $D$. 

After this, we can use $z$ to figure out which {\em entries} of $p$ are 
of type $A$, type $B$, type $C$ or type $D$. 

There remains to show that this information completely determines $p$, that is,
that there is at most one permutation that avoids 1324 and satisfies all the
type requirements imposed by $w$ and $z$. 

In order to see this, note that entries of type $A$ must be in decreasing order
 in their positions. Entries
 of type $D$
must be in decreasing order in their positions. Once these entries are placed, 
entries of type $B$ must be placed in their positions from left to right, so
 that in 
each step, the smallest available entry is placed that is larger than the
 closest
entry of type $A$ on the left. (Otherwise a red 132-pattern is formed.)
Similarly, the entries of type $D$ must be placed in their positions from
 the right, 
so that in each step, the largest available entry is placed that is smaller
 than the 
closest right-to-left maximum on the right. (Otherwise a blue 213-pattern
 is formed.)
\end{proof}

\begin{corollary} \label{upperbound}
For all positive integers $n$, the inequality 
\[S_n(1324) < h_{n-1}^2 \] holds.
\end{corollary}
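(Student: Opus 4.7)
The plan is to combine Lemma \ref{determines} with two simple observations that significantly cut down the image of the injection $f$: for every 1324-avoiding $p$, both $w(p)$ and $z(p)$ must begin with the letter $A$. For $w(p)$: when the coloring algorithm reaches $p_1$ no entry has yet been colored, so rules (1) and (2) cannot trigger and $p_1$ is colored red; being the only red entry so far, $p_1$ is vacuously a left-to-right minimum of the red partial permutation, hence of type $A$. For $z(p)$: the entry $1$ is the global minimum and so can only play the role of the smallest (hence leftmost) letter of any 132-pattern, but rule (1) examines only previously-colored entries, all of which sit strictly to the left of $1$ in position, so coloring $1$ red cannot complete a red 132-pattern; rule (2) is vacuous because no entry is smaller than $1$. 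Hence $1$ is colored red, and being the global minimum it is automatically a left-to-right minimum of the red subsequence, so of type $A$.

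With both $w(p)$ and $z(p)$ forced to begin with $A$, I would finish by counting the words in $H_n$ that start with $A$. Prepending an $A$ to any word in $H_{n-1}$ still yields a $CB$-free word (and every $A$-initial word in $H_n$ arises this way), so the count is exactly $h_{n-1}$. Together with the injectivity of $f$ from Lemma \ref{determines} this gives
\[ S_n(1324) \;\le\; h_{n-1} \cdot h_{n-1} \;=\; h_{n-1}^2. \]
For $n \ge 2$ the inequality is strict: the map $f$ is very far from surjective onto the pairs with $w_1 = z_1 = A$, and it suffices to exhibit any single $(w,z)$ with $w_1=z_1=A$ that is not realized as $(w(p),z(p))$. (For example, a length-$n$ word ending in $C$ cannot be $w(p)$, since the last entry of any permutation is trivially a right-to-left maximum of whatever subsequence contains it.)

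I do not foresee a serious obstacle: the two forced-$A$ claims fall directly out of the coloring rules, and the enumeration of $A$-initial $CB$-free words is immediate. The only mild subtlety is upgrading ``$\le$'' to strict ``$<$'', which is handled by producing one unrealized pair as above.
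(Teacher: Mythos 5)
Your proposal is correct and follows essentially the same route as the paper: invoke the injectivity of $f$ from Lemma \ref{determines}, observe that both $p_1$ and the entry $1$ are necessarily red and are left-to-right minima among the red entries, so $w(p)$ and $z(p)$ both begin with $A$, and count the $A$-initial $CB$-free words as $h_{n-1}$. You are in fact somewhat more careful than the paper in spelling out why the count of $A$-initial words is exactly $h_{n-1}$ and in addressing the strictness of the inequality (which, as your caveat about $n\geq 2$ implicitly notes, genuinely needs an unrealized pair, since injectivity alone only gives $\leq$).
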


\begin{proof} The fact that $S_n(1324)<h_n^2$ is immediate from the injective
property of $f$ that we have just proved in Lemma \ref{determines}. In order
to complete the proof of this Corollary, note that the image of $f$ consists
of ordered pairs $(w(p),z(p))$ in which both $w(p)$ and $z(p)$ starts with 
an $A$, since both $p_1$ and 1 are always red, and left-to-right minima within
the string of red entries (and even in all of $p$).
\end{proof}

\begin{center} {\bf Acknowledgment} \end{center}
I am grateful to Vincent Vatter for some valuable remarks 
streamlining the proofs of two of my lemmas.


\begin{thebibliography}{99}
\bibitem{albert}
M. H. Albert, M. Elder, A. Rechnitzer, P. Westcott, M. Zabrocki, 
 On the Stanley-Wilf limit of 4231-avoiding permutations and a
 conjecture of Arratia, 
{\em Adv. in Appl. Math.} {\bf 36} (2006), no. 2, 96--105.
\bibitem{arratia} R. Arratia,  On the Stanley-Wilf conjecture for the
 number of permutations avoiding a given pattern,
 {\em Electronic J. Combin.}, {\bf 6} (1999), no. 1,  N1.
\bibitem{combperm} M. B\'ona, {\em Combinatorics of Permutations},
second edition, CRC Press - Chapman Hall, 2012.
\bibitem{records} M. B\'ona, Records in Stanley-Wilf limits, {\em European J. Combin.},
 {\bf 28} (2007), no. 1, 75-¡V85.
\bibitem{claesson}
A. Claesson, V. Jelinek, E. Steingr\'{\i}msson,
 Upper bounds for the Stanley-Wilf limit of 1324 and
other layered patterns. 
{\em  Preprint, arXiv:1111.5736v1, November 24, 2011, submitted to
Elsevier. }
\end{thebibliography}
\end{document}